\documentclass[11pt]{amsart}
\usepackage[utf8x]{inputenc}
\usepackage{amsfonts}
\usepackage{amssymb}
\usepackage{amsmath}
\usepackage{delarray}
\vfuzz2pt 
\hfuzz2pt 
\newtheorem{thm}{Theorem}[section]

\newtheorem{lemma}[thm]{Lemma}

\theoremstyle{definition}
\newtheorem{define}[thm]{Definition}

\numberwithin{equation}{section}


\newcommand{\cL}{\mathcal L}

\newcommand{\cX}{\mathcal X}

\newcommand{\bbR}{\mathbb R}

\newcommand{\bbC}{\mathbb C}

\begin{document}

\title{New results on the linearization of Nambu structures}

\author{Nguyen Tien Zung}
\address{Institut de Mathématiques de Toulouse, UMR5219, Université Toulouse 3}
\email{tienzung.nguyen@math.univ-toulouse.fr}

\date{Version 1, January 2012}
\subjclass{37G05,53C12, 58A17}
\keywords{singular foliation, integrable differential form, Nambu structure, linearization}%

\maketitle

\begin{abstract} In a paper with Jean-Paul Dufour in  1999 
\cite{DufourZung-Nambu1999}, we gave a classification of linear Nambu structures, 
and obtained linearization results for Nambu structures with a nondegenerate linear part. 
There was a case left open in \cite{DufourZung-Nambu1999}, 
namely the case of smooth linearization of Nambu structures with a Type 1
hyperbolic linear part which satisfies a natural signature condition.  In this paper, we will 
show that such hyperbolic  Nambu structures are  also smoothly linearizable. 
We will also give a strong version of the analytic linearization theorem in the 
analytic case, improving a result obtained in \cite{DufourZung-Nambu1999}.
\end{abstract}

\section{Introduction}

Nambu structures were first defined by Takhtajan \cite{Takhtajan}, extending an idea of
Nambu \cite{Nambu}, as a way to generalize the Hamiltonian formalism. Since then, Nambu structures
have attracted a lot of attention from the physicists, in M-theory in particular. 
From the mathematical point of view, a Nambu structure is nothing but  an integrable 
multi-vector field in the sense that it gives rises to a singular foliation and a contravariant volume 
form on it.  We  refer to Chapter 6 of  \cite{DufourZung-PoissonBook} for an introduction to Nambu structures,
their relations with singular foliations and integrable differential forms, and the results mentioned in this section. 
We will use here a slightly different definition of Nambu structures than the original one 
given by Takhtajan \cite{Takhtajan}, in order to treat Nambu structures of order 2 on the 
same footing with Nambu structures of other orders:

\begin{define}
A Nambu structure (or tensor) of order $q$ on a manifold $M$ is a $q$-vector field $\Pi$ on $M$ which satisfies
the following integrability condition: for any point $p \in M$ such that $\Pi(p) \neq 0$, there is
a local coordinate system $(x_1,\hdots,x_n)$ in a neighborhood of $p$ such that 
$$\Pi= {\partial \over \partial x_1} \wedge \hdots \wedge {\partial \over \partial x_q}$$ 
in that neighborhood.
\end{define}

When $q \neq 2$ then the above definition is  equivalent to the original definition of Takhtajan. 
When $q = 2$ then our  definition is more restrictive: Nambu structures of order 2 in the sense of 
Takhtajan are the same as Poisson structures, while Nambu structures of order 2 in our sense are 
Poisson structure of rank (at most) 2.  A Nambu structure of order $q$ naturally gives rise to a 
singular foliation of rank $q$, whose tangent distribution near a regular point is locally generated by 
the vector fields ${\partial \over \partial x_1},\hdots,{\partial \over \partial x_q}$ in the above definition.
This foliation is equipped with a volume form written in contravariant way, which is the Nambu tensor
itself. Via the so called saturation process, any singular foliation is essentially equivalent to a foliation given by 
a Nambu structure (see Chapter 6 of \cite{DufourZung-PoissonBook}), so one can say that any foliation
can be essentially given by a Nambu structure. The differential forms which are dual to  Nambu structures are known 
under the name of integrable differential forms (htye are called co-Nambu forms in \cite{DufourZung-Nambu1999}), 
and both Nambu structures and integrable differential forms are useful in the study of singular foliations. 

Let $\Pi$ be a smooth Nambu stucture in a neighborhood of a point $O$ in an $n$-dimensional manifold 
$M$ such that $\Pi (O) = 0$.  Then the linear part $\Pi^{(1)}$ of  $\Pi$ at $O$ is a linear Nambu structure. 
According to the classification
of linear Nambu structures \cite{DufourZung-Nambu1999}, $\Pi^{(1)}$ belongs to one
of the following 2 types (in some coordinate system):

\underline{Type 1}:                                                          
$                                                                            
\Pi^{(1)} = \sum_{j=1}^{r} \pm x_j \partial / \partial x_1 \wedge ... \wedge     
\partial / \partial x_{j-1} \wedge                                         
\partial / \partial x_{j+1} \wedge ... \wedge                              
\partial / \partial x_{q+1} +   \\                                             
\sum_{j=1}^{s} \pm x_{q+1+j} \partial / \partial x_{1} \wedge ... \wedge     
\partial / \partial x_{r+j-1} \wedge                                           
\partial / \partial x_{r+j+1} \wedge                                         
\partial / \partial x_{q+1},                                                  
$                                                                            
with $0 \leq r \leq q + 1, 0 \leq s \leq \min(n-q-1, q+1-r)$.                      
                                                                             
\underline{Type 2}:                                                          
$                                                                            
\Pi^{(1)} = \partial / \partial x_{1} \wedge ... \wedge \partial / \partial x_{q-1} \wedge                                           
(\sum_{i,j=q}^{n} b^i_j x_i \partial / \partial x_j) ,                      
$                                 
where $b^i_j$ are constants.                                          

When talking about Type 1 structures, it is usually understood that $q \geq 2$, because the case $q=1$ (i.e.
vector fields) belongs to Type 2.

If $\Pi^{(1)}$ is of 
Type 2, with a nondegenerate matrix $(b^i_j)$, then $\Pi$ itself is decomposable in a neighborhood of $O$,
and the linearization problem of $\Pi$ is reduced to the very well-studied problem of linearization of a
$(n-q+1)$-dimensional vector field whose linear part is $\sum_{i,j=q}^{n} b^i_j x_i \partial / \partial x_j$. 
In particular, if the vector field $\sum_{i,j=q}^{n} b^i_j x_i \partial / \partial x_j$ is non-resonant then $\Pi$
is smoothly linearizable, and if in addition $\Pi$ is analytic and the eigenvalues of 
$\sum_{i,j=q}^{n} b^i_j x_i \partial / \partial x_j$ satisfy some diophantine conditions, then $\Pi$ is also
analytically linearizable in a neighborhood of $O$. (See Theorem 6.2 of  \cite{DufourZung-Nambu1999}). 

In this paper we will be concerned with the Type 1 case.  
Denote by 
\begin{equation}
\omega = i_{\Pi} \Omega 
\end{equation}
the integrable differential form dual to $\Pi$, 
where $\Omega = dx_1 \wedge \hdots \wedge dx_n$ is a volume form. Then  we have the following formula
for the linear part $\omega^{(1)} = i_{\Pi^{(1)}} \Omega $ of $\omega$ in the Type 1 case:
\begin{equation} \label{eqn:omega_type1}
\omega^{(1)} = dx_{q+2} \wedge ... \wedge dx_{n} \wedge dQ, 
\end{equation}
where  $\displaystyle Q =   {1 \over 2}\sum_{j=1}^{r} \pm x_j^2  + 
\sum_{j=1}^s  \pm x_{r+j} x_{q+1+j}$ is a quadratic function, 
with $0 \leq r \leq q + 1, 0 \leq s \leq \min(n-q-1, q+1-r)$.

When $r = q+1$ (then automatically $s=0$) in the above formula, we say that the linear Nambus tructure of 
type 1 is {\bf nondegenerate}.  Then the signature of $Q$ (i.e. the numbers of plus signs and minus sings), 
up to permutation, is a discrete invariant of the structure. 
A nondegenerate linear Nambu structure of type 1 is called {\bf elliptic} if the corresponding
quadratic form $Q$ is definite (positive or negative); otherwise it is called {\bf hyperbolic}. In the elliptic case,
the regular leaves of the corresponding foliation are $q$-dimensional spheres, while in the hyperbolic case
they are hyperboloids (non-compact $q$-dimensional quadrics). 

The following results are known about the linearization of Nambu structures $\Pi$ of order $q\geq 2$  
of Type 1 nondegenerate in dimension $n$ 
(see \cite{DufourZung-Nambu1999}  and Chapter 6 of \cite{DufourZung-PoissonBook}): 

1) The local singular locus $\Sigma$ of $\Pi$ (i.e. the set of points of the manifold at which $\Pi$ vanishes) near $O$
is a $(n-q-1)$-dimensional submanifold which contains $O$.

2)  $\Pi$ is formally linearizable along $\Sigma$. In other words, the exists a local smooth coordinate system
$(x_1,\hdots,x_n)$ such that $\Sigma = \{x_1 = \hdots = x_{q+1} = 0 \}$, and $\Pi = \Pi^{(1)} + F$, where
$\Pi^{(1)}$ is linear in the coordinates $(x_1,\hdots,x_n)$ and $F$ is flat at every point of $\Sigma$. (The theorem written
in \cite{DufourZung-Nambu1999} says that $\Pi$ is formally linearizable at $O$, but its proof actually shows that
it's formally linearizable along $\Sigma$, and we will need this stronger statement of formal linearization in this
paper).

3) If moreover $\Pi^{(1)}$ is elliptic, then $\Pi$ is smoothly linearizable.

4) If $\Pi$ is analytic and $\Pi^{(1)}$ is nondegenerate of Type 1 then $\Pi$ is analytically
linearizable up to multiplication by an analytic function which does not vanish at the singular point $O$.

5) There exist smooth Nambu structures whose linear part is of Type 1 hyperbolic of signature $(q-1,2)$, 
which are non-linearizable homeomorphically.

In this paper, we will complete the above results by the following linearization theorems:

\begin{thm} \label{thm:NambuHyperbolic}
Let $\Pi$ be a smooth Nambu structure of order $q \geq 3$
which vanishes at a point and whose linear part at that point is
of Type 1 nondegenerate hyperbolic with signature different from $(*,2)$ and $(2,*)$. Then $\Pi$ is locally
smothly linearizable.
\end{thm}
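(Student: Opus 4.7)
My plan is to work with the dual integrable differential form $\omega=i_\Pi\Omega$, whose linear part $\omega^{(1)}=dx_{q+2}\wedge\cdots\wedge dx_n\wedge dQ$ is determined by a nondegenerate quadratic form $Q$ on $\mathbb R^{q+1}$ of hyperbolic signature $(p,q+1-p)$ with $p\neq 2$ and $q+1-p\neq 2$. It suffices to produce smooth coordinates that put $\omega$ in this normal form up to a multiplicative factor $g$, since the Nambu compatibility will force $g$ to be basic (leafwise constant) and hence absorbable into one of the transverse first integrals to yield the full linearization of $\Pi$. As a preliminary reduction, I use the formal linearization along $\Sigma$ from item (2) of the introduction to assume that $\Sigma=\{x_1=\cdots=x_{q+1}=0\}$ and $\omega=\omega^{(1)}+\eta$ with $\eta$ flat along $\Sigma$.

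The crux is to construct smooth first integrals $\tilde x_{q+2},\ldots,\tilde x_n,\tilde Q$, each obtained from its linear-model counterpart by a flat perturbation along $\Sigma$, so that $\omega=g\cdot d\tilde x_{q+2}\wedge\cdots\wedge d\tilde x_n\wedge d\tilde Q$ for some smooth nonvanishing $g$. The nontrivial ingredient is $\tilde Q$: writing $\tilde Q=Q+\phi$ with $\phi$ flat and requiring that $d\tilde Q$ annihilate the foliation defined by $\omega$ leads to a cohomological equation on each regular leaf. In the linear model the regular leaves are diffeomorphic to $S^{p-1}\times\mathbb R^{q+1-p}$ or $S^{q-p}\times\mathbb R^{p}$ according to the sign of $Q$; their first de Rham cohomology vanishes iff neither sphere is one-dimensional, that is, iff $p\neq 2$ and $q+1-p\neq 2$. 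This is precisely the signature hypothesis, and it supplies solvability of the cohomological equations for both $\tilde Q$ and the transverse $\tilde x_i$'s. The flatness of the obstructing cocycles (which are flat along $\Sigma$ by construction) together with Borel-type summation should allow one to choose the flat primitives uniformly in the transverse parameters.

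Once the first integrals are in place, $g$ is basic by the Nambu compatibility, so a one-dimensional antiderivative in the $\tilde Q$-direction absorbs it and yields $\omega=d\tilde x_{q+2}\wedge\cdots\wedge d\tilde x_n\wedge d\hat Q$. A final application of the smooth Morse lemma with parameters to $\hat Q$, viewed as a smooth family of nondegenerate Morse functions on $\mathbb R^{q+1}$ parameterized by $(\tilde x_{q+2},\ldots,\tilde x_n)$ with common critical point $O$ and Hessian $Q$, produces coordinates $(y_1,\ldots,y_{q+1})$ in which $\hat Q=Q(y)$, completing the linearization. The main obstacle is the leafwise cohomological step producing $\tilde Q$; this is the only place where the signature hypothesis intervenes, and its necessity is attested by the non-linearizable examples of signature $(q-1,2)$ in item (5).
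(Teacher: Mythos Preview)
Your approach is genuinely different from the paper's, and your topological observation---that the signature hypothesis is exactly the condition that the model hyperboloid leaves $S^{p-1}\times\bbR^{q+1-p}$ and $S^{q-p}\times\bbR^{p}$ have vanishing $H^1$ (equivalently, trivial $\pi_1$)---is correct and is the heuristic reason the theorem holds. But the proposal has a real gap at precisely the step you yourself flag as the main obstacle, and a secondary error in the treatment of the factor $g$.

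The paper does not try to build first integrals directly. Instead it \emph{slices}: contracting $\Pi$ with $dx_{k+1}\wedge\cdots\wedge dx_{q+1}$ (where $k\geq 3$ is the positive index) yields a Nambu structure $\Lambda$ of order $k-1$ whose linear part is Type 1 \emph{elliptic}, so the already-known elliptic linearization theorem applies. This produces a linear $SO(k)$-action whose orbits are the leaves of $\Lambda$, and an averaging argument shows it preserves $\Pi$ itself. Restricting to a slice $\{x_1=\cdots=x_{k-1}=0\}$ reduces to linearizing a lower-order Nambu structure of signature $(1,q+1-k)$, equivariantly under an involution. Repeating the slicing on the negative side reduces everything to linearizing a single vector field with linear part $z_1\partial_{z_1}-z_2\partial_{z_2}$ along a codimension-two zero locus, which is handled by the equivariant Sternberg--Chen theorem of Belitskii--Kopanskii. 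The signature hypothesis enters only to ensure each index is either $1$ or $\geq 3$, so that an elliptic slice exists.

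In your outline, the ``cohomological equation on each leaf'' is not well-posed as written: on any leaf $L$ of the perturbed foliation, $d(Q|_L)$ is already exact (it is the differential of a globally defined function restricted to $L$), so $H^1(L)=0$ is not literally the obstruction to a leafwise primitive. The genuine difficulty is choosing the leafwise constants coherently so that the resulting $\phi$ is \emph{smooth across leaves and through the singular cone and $\Sigma$}; this is a holonomy/leaf-space question for a singular foliation with non-compact leaves, where no averaging is available. Carrying this out would amount to proving a Moussu-type first-integral theorem for integrable $p$-forms with $p>1$, which (as the introduction notes) does not follow from the $p=1$ case and is essentially the content of the theorem you are trying to prove. ``Borel-type summation'' does not help here: the issue is not assembling a formal series but controlling a smooth solution along non-compact hyperboloids degenerating at $\Sigma$. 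Separately, the claim that $g$ is forced to be basic by ``Nambu compatibility'' is false as stated: any nonvanishing smooth multiple of a Nambu tensor is again Nambu with the same foliation, so absorbing $g$ requires an additional argument (a Moser-path argument in the elliptic case, Levi decomposition in the analytic case).
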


(The case $q=2$ is excluded from Theorem \ref{thm:NambuHyperbolic} because it is impossible: if $q=2$
and the structure is hyperbolic then its signature is  $(1,2)$ or $(2,1)$).

\begin{thm} \label{thm:NambuAnalytic}
Let $\Pi$ be an analytic (real or complex) Nambu structure which vanishes at a point and whose linear part 
at that point is nondegenerate Type 1. The $\Pi$ is locally analytically linearizable.
\end{thm}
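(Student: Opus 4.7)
The plan is to combine Result~(4) of the introduction (analytic linearization up to multiplication by a non-vanishing analytic factor) with an application of the parametric analytic Morse lemma. By (4), after an analytic change of coordinates tangent to the identity we may assume
\[
\Pi = f \cdot \Pi^{(1)} \qquad \text{near } O,
\]
with $f$ analytic and $f(O) = 1$; dually,
\[
\omega = i_\Pi \Omega = \pm f\, dx_{q+2} \wedge \cdots \wedge dx_n \wedge dQ.
\]
The goal is to absorb the factor $f$ by an additional analytic diffeomorphism.

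The key step, which I expect to be the main obstacle, is a rigidity observation that should be extractable either from the proof of Result~(4) or from a direct inspection: one can arrange, within the scope of~(4), that $f$ depends only on the Casimirs $Q, x_{q+2}, \ldots, x_n$ of $\Pi^{(1)}$. Intuitively, the ambiguity in Result~(4) is precisely a reparametrization of $\omega$ along the $Q$-direction (keeping the transverse Casimirs fixed), so the ``multiplicative factor'' can be chosen to be a function $F(Q, x_{q+2}, \ldots, x_n)$. Equivalently, setting
\[
G(Q, x_{q+2}, \ldots, x_n) := \int_0^Q F(s, x_{q+2}, \ldots, x_n)\, ds,
\]
a short computation (the correction terms $(\partial G/\partial x_i)\, dx_i$ for $i \geq q+2$ are annihilated when wedged with $dx_{q+2} \wedge \cdots \wedge dx_n$) gives
\[
\omega = \pm\, dx_{q+2} \wedge \cdots \wedge dx_n \wedge dG.
\]
Thus the problem is reduced to bringing $G$ into the quadratic normal form~$Q'$.

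Viewed as an analytic family of functions of $(x_1, \ldots, x_{q+1})$ parametrized analytically by $(x_{q+2}, \ldots, x_n)$, the function $G$ has a critical point at the origin of each slice, with transverse Hessian $F(0, x_{q+2}, \ldots, x_n) \cdot \mathrm{diag}(\pm 1, \ldots, \pm 1)$; this is nondegenerate and, since $F(O) = f(O) = 1$, has signature matching that of $Q$. The parametric analytic Morse lemma then supplies an analytic change of the $(x_1, \ldots, x_{q+1})$-coordinates, depending analytically on the parameters, that carries $G$ to $Q'(y) = \tfrac{1}{2} \sum_{j=1}^{q+1} \pm y_j^2$. In the resulting coordinates $(y_1, \ldots, y_{q+1}, x_{q+2}, \ldots, x_n)$ we obtain $\omega = \pm\, dx_{q+2} \wedge \cdots \wedge dx_n \wedge dQ'$, the desired linear normal form $\omega^{(1)}$, so $\Pi$ is analytically linearized. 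The argument applies uniformly in the real and complex analytic categories. The crux, as noted, is the Casimir-rigidity of the multiplicative factor of Result~(4); once that is secured, the remaining ingredients (integration along the $Q$-direction and the parametric analytic Morse lemma) are classical.
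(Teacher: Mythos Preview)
Your approach differs substantially from the paper's. The paper proves the theorem via Levi decomposition: from the Lie algebra $\cL$ of leaf-tangent $\Pi$-preserving vector fields one extracts a Levi factor $\hat{\frak g}\cong so(q+1,\bbC)$ (or a real form), linearizes this $\frak g$-action, and observes that $\Pi$ is then $\frak g$-invariant, whence $\Pi=F(Q,x_{q+2},\ldots,x_n)\,\Pi^{(1)}$ with $F$ a function of the Casimirs; the factor $F$ is then absorbed by a radial rescaling $\tilde y_j=y_jf$. In the analytic category this is carried out through a Newton-type fast-convergence iteration, with norm estimates coming from the bounded vanishing of $H^1$ and $H^2$ of the semisimple algebra $\frak g$ (\`a la Conn).

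Your proposal has a genuine gap precisely at the step you flag as the crux. The claim that the factor $f$ in $\Pi=f\,\Pi^{(1)}$ can be arranged to depend only on the Casimirs $Q,x_{q+2},\ldots,x_n$ does not follow from Result~(4), nor from the Nambu integrability condition: for \emph{any} analytic $f$ with $f(O)\neq 0$ the tensor $f\,\Pi^{(1)}$ is again a Nambu structure with the same foliation, so integrability imposes no constraint whatsoever on $f$, and Result~(4) as stated gives no further control. Your heuristic that ``the ambiguity in Result~(4) is precisely a reparametrization along the $Q$-direction'' is not correct; the ambiguity is much larger. In the paper's argument the Casimir property of the factor is obtained exactly \emph{through} the linearized $\frak g$-action (a $\frak g$-invariant function being a function of $Q,x_{q+2},\ldots,x_n$), so what you are assuming is essentially the output of the paper's main step. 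The paper's own final remarks record that the author attempted, and could not complete, an absorption-type argument starting from Result~(4) in the analytic category (via Moser's path method), the difficulty lying at the singular locus; your Morse-lemma variant runs into the same missing ingredient.

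There is also a secondary slip: linearizing $\omega$ is not the same as linearizing $\Pi$. After your parametric Morse change one has $\omega=\omega^{(1)}$ in the new coordinates, but $\Pi$ becomes $J^{-1}\Pi^{(1)}$ with $J$ the Jacobian of the change (the paper makes exactly this distinction when passing from the Nambu theorem to its corollary on integrable $p$-forms). If the Casimir property of $f$ were available, the clean route is to skip $\omega$ and Morse entirely and solve directly for an analytic $h=h(Q,x_{q+2},\ldots,x_n)$, $h(O)=1$, such that the rescaling $\tilde x_j=x_jh$ for $j\le q+1$ carries $F\,\Pi^{(1)}$ to $\Pi^{(1)}$; this reduces to an ODE in $Q$ and is precisely the paper's ``last step''.
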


Theorem \ref{thm:NambuAnalytic} is an  improvement of the result in \cite{DufourZung-Nambu1999} for the
analytic Type 1 case: $\Pi$ is linearizable without the need of multiplication by a function. 
Therem \ref{thm:NambuHyperbolic} was conjectured in \cite{DufourZung-Nambu1999}. When $q=n-1$
then Theorem \ref{thm:NambuHyperbolic} is a simple consequence of  Moussu's theorem \cite{Moussu2} 
about the existence of  smooth first integrals for
integrable differential 1-forms (Pfaffian systems), but when $q < n-1$ it does not follow from Moussu's
theorem, because the dual integrable differential forms will be $p$-forms with $p= n- q  > 1$, and not 1-forms.

As a direct consequence of Theorem \ref{thm:NambuHyperbolic}, we obtain the following smooth 
linearization theorem for hyperbolic integrable differential $p$-forms:

\begin{thm} Let $\omega$ be a smooth integrable differential $p$-form on a manifold $M$ and $O$
be a point of $M$ in $\omega(O) = 0$ and such that the linear part of $\omega$ at $O$ 
is nondegenerate of Type 1 hyperbolic with the signature different from $(*,2)$ and $(*,2)$. 
Then $\omega$ is smoothly  linearizable up to multiplication by a smooth function in a neighborhood of $O$. 
In other words,  there exists a smooth function $f$ and a smooth coordinate system in a neighhborhood of $O$
such that $f\omega$ is linear in that coordinate system.
\end{thm}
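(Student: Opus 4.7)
The proof is a direct reduction to Theorem~\ref{thm:NambuHyperbolic} via the standard duality between integrable $p$-forms and Nambu tensors of order $q=n-p$. The plan is to fix any local smooth volume form $\Omega$ on a neighborhood of $O$ (for instance $\Omega=dx_1\wedge\cdots\wedge dx_n$ in a chart) and define the $q$-vector field $\Pi$ by the contraction relation $i_\Pi\Omega=\omega$. By the correspondence explained in Chapter~6 of \cite{DufourZung-PoissonBook}, for $q\geq 3$ the integrability of $\omega$ as a $p$-form is equivalent to the Nambu integrability of $\Pi$, and the correspondence preserves both the vanishing locus and the shape of the linear part through formula~(\ref{eqn:omega_type1}). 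Hence $\Pi(O)=0$ and $\Pi^{(1)}$ is nondegenerate of Type~1 hyperbolic with the same signature as $\omega^{(1)}$. The signature restriction in the hypothesis is exactly what is needed to ensure $q+1\geq 4$, i.e.\ $q\geq 3$, and to land in the range of Theorem~\ref{thm:NambuHyperbolic}.

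Applying Theorem~\ref{thm:NambuHyperbolic} produces a local smooth coordinate system $(y_1,\hdots,y_n)$ near $O$ in which $\Pi=\Pi^{(1)}$ is linear. In these coordinates, write
\[
\Omega = g(y)\, dy_1\wedge\cdots\wedge dy_n,
\]
where $g$ is smooth and does not vanish at $O$. Then
\[
\omega = i_\Pi\Omega = g(y)\, i_{\Pi^{(1)}}\bigl(dy_1\wedge\cdots\wedge dy_n\bigr),
\]
and the $p$-form on the right-hand side is linear in $(y_1,\hdots,y_n)$ by the computation behind formula~(\ref{eqn:omega_type1}). Setting $f=1/g$, which is well defined and smooth in a neighborhood of $O$, we conclude that $f\omega$ equals the linear $p$-form $i_{\Pi^{(1)}}(dy_1\wedge\cdots\wedge dy_n)$ in the coordinates $(y_1,\hdots,y_n)$.

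The substantive content is concentrated entirely in the invocation of Theorem~\ref{thm:NambuHyperbolic}; everything else is bookkeeping around the duality $\omega\leftrightarrow\Pi$. The only point requiring mild care is checking that this duality really matches the Type~1 hyperbolic class with the given signature for $\omega$ to the same class for $\Pi$, which is immediate from comparing the normal forms. The multiplicative factor $f=1/g$ cannot in general be removed: it reflects the fact that the ambient volume form $\Omega$ will not, in the linearizing coordinates of $\Pi$, agree with $dy_1\wedge\cdots\wedge dy_n$, and there is no reason to expect to linearize both $\omega$ and the chosen volume form simultaneously. This is why the statement is formulated ``up to multiplication by a smooth function''.
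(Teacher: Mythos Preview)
Your proof is correct and follows exactly the route the paper intends: the paper states this theorem as a ``direct consequence of Theorem~\ref{thm:NambuHyperbolic}'' without spelling out the details, and your argument supplies precisely the standard duality bookkeeping (define $\Pi$ by $i_\Pi\Omega=\omega$, linearize $\Pi$, absorb the Jacobian of the volume form into $f$). The one place your phrasing could be tightened is the claim that the signature restriction alone forces $q\geq 3$: strictly speaking you also use that Type~1 already excludes $q=1$, and then the exclusion of signatures $(2,*)$ and $(*,2)$ rules out $q=2$ since the only hyperbolic signatures in three variables are $(1,2)$ and $(2,1)$; but this is implicit in what you wrote and does not affect the validity of the argument.
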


Recall that, unlike Nambu structures which can be linearized without the need of multiplication by a function, 
integrable differential forms are linearizable only up to multiplication by a function in general, 
because they are not closed in general.

Our proof of Theorem \ref{thm:NambuHyperbolic} is based on the smooth linearization result in the
elliptic case (!) and a surprisingly simple idea of slicing of folitations and Nambu structures .
The proof of Theorem \ref{thm:NambuAnalytic} is based on the method of Levi decomposition, 
analogous to the one used in the problem of normal forms of Poisson structures 
\cite{MonnierZung-Levi2004,Wade-Levi1997,Zung-Levi2003}. This Levi decomposition method also provides
a new simple proof of the formal linearization theorem for formal Nambu structures with a nondegenerate linear
part of Type 1.

The rest of this paper is organized as follows. Theorem \ref{thm:NambuHyperbolic} in proved in 
Section \ref{section:proof}. Theorem \ref{thm:NambuAnalytic} is proved in Section \ref{section:analytic}.
Section \ref{section:remarks} is devoted to some final remarks about related problems, results and methods.

\section{Proof of Theorem \ref{thm:NambuHyperbolic}}
\label{section:proof}

\subsection{Step 1: Slicing} Let $\Pi$ be a smooth Poisson structure of order $q$ which satisfies the hypotheses
of Theorem \ref{thm:NambuHyperbolic}. By the formal linearization result, we can assume in addition that,
, the $q$-vector field $\Pi - \Pi^{(1)}$, where $\Pi^{(1)}$ 
is the linear part of $\Pi$ in a local smooth coordinate system $(x_1, \hdots, x_n)$, is flat along
the singular locus $\Sigma = \{x_1 = \hdots = x_{q+1} = 0\}$ of $\Pi$.  Then the differential $p$-form
$\omega - \omega^{(1)}$, where $p = n-q$ and $\omega$ denotes 
the integrable differential form dual to $\Pi$ 
with respect to the volume form $d x_1 \wedge \hdots \wedge dx_n$, is also flat along $\Sigma$.
According to the hypotheses of Theorem \ref{thm:NambuHyperbolic}, we can assume that
\begin{equation} 
\omega^{(1)} = dx_{q+2} \wedge ... \wedge dx_{n} \wedge dQ, 
\end{equation}
where
\begin{equation}
Q =  {1 \over 2} (\sum_{j=1}^{k} x_j^2 - \sum_{j=k+1}^{q+1} x_j^2),
\end{equation}
with $k , q+1 -k \geq 1$ and $k, q+1-k \neq 2$.

Assume that $k \geq 3$ (if $k=1$ then this step can be skipped). Denote by 
\begin{equation}
\Lambda = (dx_{k+1} \wedge \hdots \wedge dx_{q+1})   \lrcorner \Pi
\end{equation}
the contraction of $\Pi$ with $dx_{k+1} \wedge \hdots \wedge dx_{q+1}$. Then $\Lambda$ is a 
Nambu structure of order $k-1$ whose local foliation is obtained by ``slicing'' the local 
foliation of $\Pi$ by the  spaces $\{x_{k+1} = const., \hdots, x_{q+1} = const. \}$, i.e. each leaf
of $\Lambda$ is the intersection of such a space with a leaf of $\Pi$.

Denote by $\alpha = \Lambda \lrcorner (dx_1\wedge \hdots \wedge dx_n)$ the integrable differential
form dual to $\Lambda$ with respect to the volume form  $dx_1\wedge \hdots \wedge dx_n$. The linear 
part $\alpha^{(1)}$ of $\alpha$ is:
\begin{equation}
\alpha^{(1)} = dx_{k+1}  \wedge \hdots \wedge dx_n \wedge d(\pm {1\over 2} \sum_{j=1}^k x_j^2).
\end{equation}
It means that $\Lambda$ is of Type 1 elliptic of order $k-1$, so according to the smooth linearization 
theorem for elliptic Nambu stuctures, $\Lambda$ is smoothly linearizable. 

With the the aid of a simple implicit function theorem, due to the nondegeneracy,
$\Lambda$ can also be viewed as $q+1-k$-dimensional family of elliptic Nambu structures of order $k-1$
on the spaces $\{x_{k+1} = const., \hdots, x_{q+1} = const. \}$. Applying the parametrized version
of the smooth linearization theorem for elliptic Nambu structures to this family, we obtain a local smooth
coordinate system which contains the coordinates $x_{k+1},\hdots, x_{q+1}$ and in which $\Lambda$
is linear. By renaming our new coordinate system, we can assume that 
$\Lambda = (dx_{k+1} \wedge \hdots \wedge dx_{q+1})   \lrcorner \Pi$ is already 
linear in the coordinate system $(x_1,\hdots,x_n)$.

\subsection{Step 2: $SO(k)$ symmetry group}

After Step 1, we get a local smooth coordinate system $(x_1,\hdots,x_n)$ with the following properties:

1) $\Pi$ is equal to $\Pi^{(1)}$ plus a term which is flat along the singular locus 
$\Sigma = \{x_1 = \hdots = x_{q+1} = 0\}$ of $\Pi$.

2) $\omega^{(1)} = dx_{q+2} \wedge ... \wedge dx_{n} \wedge d(\sum_{j=1}^{k} x_j^2 - \sum_{j=k+1}^{q+1} x_j^2) .$

3) $\Lambda = (dx_{k+1} \wedge \hdots \wedge dx_{q+1})   \lrcorner \Pi$ is linear.

Notice that there is a natural linear $SO(k)$ action 
(rotations in spaces $\{x_{k+1} = const. , \hdots, x_n = const. \}$) 
which preserves $\Lambda$  and $\Pi^{(1)}$ and whose orbits are exactly the leaves of the 
foliation of $\Lambda$. The leaves of $\Pi$ are invariant under this action.

Observe that, this $SO(k)$ action also preserves $\Pi$, and not only $\Lambda$  and $\Pi^{(1)}$.
Indeed, by averaging over the action of $SO(k)$, we obtain from $\Pi$ another Nambu structure $\Pi_1$ 
which has the same foliation as $\Pi$, and is $SO(k)$-invariant:
\begin{equation}
\Pi_1 := \int_{g \in SO(k)} g^*\Pi \, d\mu, 
\end{equation}
where $d\mu$ denotes the Haar measure on $SO(k)$, and $g^*\Pi$ denotes the push-forward of $\Pi$
by the action of an element $g \in SO(k)$. By construction, we also have that 
$(dx_{k+1} \wedge \hdots \wedge dx_{q+1})   \lrcorner \Pi_1 = \Lambda$, and so
$(dx_2 \wedge  \hdots \wedge dx_{q+1})   \lrcorner \Pi = (dx_2 \wedge  \hdots \wedge dx_{q+1})   
\lrcorner \Pi_1 = \pm (dx_2 \wedge \hdots \wedge dx_k) \lrcorner \Lambda = \pm x_1$. 
The form $dx_2 \wedge  \hdots \wedge dx_{q+1}$ is a volume form on the
leaves of $\Pi$ and $\Pi_1$ almost everywhere, and the above equality means that $\Pi$ coincides
with $\Pi_1$ almost everywhere (because $\Pi$ and $\Pi_1$ already have the same leaves). 
By continuity we have that $\Pi = \Pi_1$ everywhere, i.e. $\Pi$
is in fact $SO(k)$-invariant.

\subsection{Step 3: Reduction to the case of signature (1,*)} 
Starting with a coordinate system $(x_1,\hdots,x_n)$ which satisfies
the properties listed in Step 2, let us now restrict our attention to the local $(n-k+1)$-dimensional 
subspace
\begin{equation}
 V = \{x_1 = \hdots = x_{k-1} = 0\} \subset M,
\end{equation}
and consider the Nambu structure
\begin{equation}
\Theta = (dx_1 \wedge \hdots  \wedge dx_{k-1}) \lrcorner \Pi
\end{equation}
on $V$.

Notice that each non-trivial orbit of the above $SO(k)$-action 
(i.e. each leaf of  $\Lambda$, which is a $(k-1)$-dimensional sphere) intersects with $V$
at exactly two points, and there is an involution $\sigma$ on $V$ which permutes these points. Notice that
$\sigma$ preserves $\Theta$ on $V$, because $\Pi$ is $SO(k)$-invariant.

We have the following lemma:

\begin{lemma}
With the above notations, assume that  there is a smooth coordinate system $(y_k, \hdots,y_n)$ on $V$
which is equal to $(x_k,\hdots,x_n)$ plus flat terms, such that $\Theta$ is linear in this coordinate
system $(y_k, \hdots,y_n)$ and the permutation $\sigma$ is also linear in these coordinates. 
Extend the functions $y_k,\hdots,y_n$ to a neighborhood of $O$ in $M$ via the $SO(k)$-action
in Step 2 (i.e. so that they are $SO(k)$-invariant -- there is a unique way to do that). Then 
$(x_1,\hdots,x_{k-1},y_k,\hdots, y_n)$ is a local smooth coordinate system in which the Nambu structure
$\Pi$ is linear.
\end{lemma}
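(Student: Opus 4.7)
The strategy is to extend the linearizing coordinate change $\phi:(x_k,\ldots,x_n)\mapsto(y_k,\ldots,y_n)$ from $V$ to a neighborhood of $O$ in $M$ using the $SO(k)$-symmetry, and then to use the $SO(k)$-invariance of $\Pi$ together with the linearity of its slice $\Theta$ to force $\Pi$ itself to be linear.

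First I would construct the extended coordinate system. The linearity of $\sigma$ in both $(x_k,\ldots,x_n)$ and $(y_k,\ldots,y_n)$ makes $\phi=\mathrm{id}+(\mathrm{flat})$ commute with $\sigma$. Each point $p$ near $O$ writes as $p=g\cdot v$ with $g\in SO(k)$ and $v\in V$, uniquely up to the substitution $(g,v)\leftrightarrow(g',\sigma(v))$, and the $\sigma$-equivariance of $\phi$ is exactly what is needed for $\tilde\phi(g\cdot v):=g\cdot\phi(v)$ to be well-defined as an $SO(k)$-equivariant map near $O$. Smoothness of $\tilde\phi$ across the $SO(k)$-fixed axis $\{x_1=\cdots=x_k=0\}$ follows from the standard fact that a $\sigma$-equivariant smooth map on $V$ lifts to an $SO(k)$-equivariant smooth map on $M$, by separating $\phi-\mathrm{id}$ into its even and odd parts in $x_k$ and substituting $x_k^2$ with $x_1^2+\cdots+x_k^2$. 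Since $\tilde\phi=\mathrm{id}+(\mathrm{flat})$ is a local diffeomorphism preserving $V$ and restricting to $\phi$ there, the list $(x_1,\ldots,x_{k-1},y_k,\ldots,y_n)$ becomes a smooth coordinate system on a neighborhood of $O$.

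Next I would show $\Pi$ is linear in these coordinates. By $SO(k)$-equivariance of $\tilde\phi$, the $SO(k)$-invariance of $\Pi$ from Step 2 persists in the new system. Let $\Pi^{\mathrm{lin}}$ denote the linear Type 1 Nambu structure of signature $(k,q+1-k)$ written in the new coordinates; it is also $SO(k)$-invariant, and by hypothesis its slice by $dx_1\wedge\cdots\wedge dx_{k-1}$ on $V$ equals the now-linear $\Theta$. So $\Pi-\Pi^{\mathrm{lin}}$ is $SO(k)$-invariant, flat along $\Sigma$ by Step 1, and has vanishing slice along $V$. Reversing the averaging argument of Step 2, this forces $\Pi=\Pi^{\mathrm{lin}}$: at generic points of $V$ the stabilizer $SO(k-1)\subset SO(k)$ kills the intermediate-rank components of $\Pi-\Pi^{\mathrm{lin}}$ in the first $k-1$ tangent directions, while the vanishing slice kills the top-rank component, so $(\Pi-\Pi^{\mathrm{lin}})|_V=0$, and $SO(k)$-invariance then propagates this equality to a full neighborhood of $O$.

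The main obstacle is the technical smoothness of $\tilde\phi$ at the fixed axis together with the representation-theoretic reconstruction-from-slice argument; both reduce to classical invariant theory, namely the smooth lifting of $\sigma$-equivariant functions, and the triviality of $SO(k-1)$-invariants in $\bigwedge^r\mathbb R^{k-1}$ for $0<r<k-1$.
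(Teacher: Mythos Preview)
Your construction of the extended coordinates is essentially the paper's: the even/odd splitting in $x_k$ followed by the substitution $x_k^2\mapsto x_1^2+\cdots+x_k^2$ is exactly what the paper does, just phrased as an $SO(k)$-equivariant extension of the map $\phi$ rather than of the individual functions.

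Where you diverge is in showing that $\Pi$ is linear, and here your argument has a gap. The paper first observes that the leaves of $\Pi$ are the $SO(k)$-saturations of the leaves of $\Theta$; since $\Theta$ is now linear, the foliation of $\Pi$ agrees with that of $\Pi^{(1)}$, whence $\Pi=f\,\Pi^{(1)}$ for some scalar $f$, and the slice equality then forces $f=1$ on $V$. You instead decompose $\bigwedge^q T_pM=\bigoplus_r\bigwedge^r\bbR^{k-1}\otimes\bigwedge^{q-r}T_pV$ at a generic $p\in V$ and argue that $SO(k-1)$-invariance kills the pieces with $0<r<k-1$ while the vanishing slice kills $r=k-1$. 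But the $r=0$ piece---a $q$-vector entirely tangent to $V$---is trivially $SO(k-1)$-invariant and is trivially annihilated by $dx_1\wedge\cdots\wedge dx_{k-1}$, so nothing in your argument eliminates it. To dispose of it you must use that $\Pi(p)$ and $\Pi^{\mathrm{lin}}(p)$ are \emph{decomposable} $q$-vectors whose span contains the tangent to the $SO(k)$-orbit (the leaf of $\Lambda$), hence each is divisible by $\partial_{x_1}\wedge\cdots\wedge\partial_{x_{k-1}}$ and has zero $r=0$ part separately. That is precisely the foliation-geometric input the paper places at the beginning of its argument; once you supply it, your route effectively collapses into the paper's.
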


\begin{proof}
To say that $\sigma$ is linear in our context means that $\sigma^*y_k = - y_k$ and $\sigma^*y_j = y_j$ for
all $j \geq k+1$ on $V$. It follows that, for each $j \geq k+1$, $y_j$ is a smooth function of $n-k+1$ variables
$x_{k+1},\hdots,x_{n},r^2$ where $r^2 = x_k^2$ on $V$. Putting $r^2 = x_1^2 + \hdots + x_k^2$ instead
of $r^2=x_k^2$ in the formula for $y_j$, we get a smooth $SO(k)$-invariant extention of $y_j$ to a neighborhood
of $O$ in  $M$. Similarly, the $SO(k)$-invariant extension of $y_k$ to a neighborhood of $O$ in
$M$ is also smooth.

It remains to show that $\Pi$ is linear in  $(x_1,\hdots,x_{k-1},y_k,\hdots, y_n)$, provided that
$\Theta = (dx_1 \wedge \hdots  \wedge dx_{k-1}) \lrcorner \Pi$ is linear in  $(y_k,\hdots,y_n)$. Notice that, 
each leaf of $\Pi$ can be obtained from a leaf of $\Theta$ by taking the union of the orbits of the $SO(k)$-action
passing through it. The fact that $\Theta$ is linear implies immediately that the foliation of $\Pi$ coincides
with the foliation of a Nambu structure, i.e. $\Pi$ is linear up to multiplication by a function:
$\Pi = f \Pi^{(1)}$, where $\Pi^{(1)}$ now means the linear part of $\Pi$ in the coordinate system
$(x_1,\hdots,x_{k-1},y_k,\hdots,y_n)$, and $f$ is a function. Equality 
$(dx_1 \wedge \hdots  \wedge dx_{k-1}) \lrcorner \Pi = \Theta = \Theta^{(1)} 
= (dx_1 \wedge \hdots  \wedge dx_{k-1}) \lrcorner \Pi^{(1)}$ on $V$ implies that $f=1$ on $V$, i.e. $\Pi = \Pi^{(1)}$
at the points on $V$. But due to the $SO(k)$-invariance of both $\Pi$ and $\Pi^{(1)}$, the fact the these two
structures are equal on $V$ implies that they are equal everywhere (in a neighborhood of $O$ in $M$). Thus
$\Pi$ is linear in the coordinates $(x_1,\hdots,x_{k-1},y_k,\hdots, y_n)$. The lemma is proved.
\end{proof}

With the above lemma, the problem of linearization of $\Pi$ is now reduced to the 
problem of (equivariant with respect to $\sigma$) linearization of $\Theta$. 
If  $q+1-k > 1$ then $\Theta$ is of order $q+1-k$ and is of Type 1 hyperbolic with signature $(1, q+1-k)$.

If $q+1-k=1$ then $\Theta$ is a vector field whose linear part is of the type 
$y {\partial \over \partial y} - z {\partial \over \partial z}$. A-priori, such a linear part is degenerate in dimension
$n-k+1 > 2$, but $\Theta$ has some additional properties which will make it linreazable also in the case
$q+1-k=1$, i.e. $q=k$ and $\Theta$ is a vector field. Namely, by construction, 
$\Theta$ vanishes at the singular locus $\Sigma$ of $\Pi$,  which lies in $V$. When 
$q=k$ then $\Sigma$ is of codimension 2 in $V$, hence $\Theta$ vanishes on a submanifold of codimension 2 in
this case. Moreover,  the formal linearization of $\Pi$ along $\Sigma$ implies the formal linearization
of $\Theta$ along $\Sigma$, and the linear part of $\Theta$ is 
of the type $y {\partial \over \partial y} - z {\partial \over \partial z}$. It is these
properties which make $\Theta$ linearizable

\subsection{Step 4: Reduction to a linearization problem for vector fields}

When $q+1-k > 1$, i.e. $\Theta$ is a Nambu structure of order $q+1-k$ of Type 1 hyperbolic of signature $(1,q+1-k)$,
we can apply the above 3 steps to $\Theta$, this time to the part of negative signs in the quadratic form instead of
the part of positive signs. The problem of (equivariant with respect to $\sigma$)
linearization of $\Theta$ is then reduced to the problem of (equivariant with respect to 2 commuting involutions
$\sigma$ and $\delta$) linearization of a vector field $X$ which has properties as mentioned in the 
last paragraph of Step 3.  The following simple lemma will allow us to  linearize $X$, thus completing the proof of Theorem
\ref{thm:NambuHyperbolic}:

\begin{lemma}
 Let $X$ be a smooth vector field in a neighborhood of the origin $O$ in $\bbR^{m+2}$ with coordinates
$(z_1,\hdots, z_{m+2})$, with the following properties: \\
1) $X=0$ on the $m$-dimensional subspace $\Sigma = \{z_1 = z_2 = 0\}$. \\
2) The involutions $\sigma: (z_1, z_2,\hdots, z_{m+2}) \mapsto (-z_1, z_2,\hdots,z_{m+2})$ and
$\delta: (z_1, z_2,\hdots, z_{m+2}) \mapsto (z_1, -z_2,\hdots,z_{m+2})$ preserve $X$. \\
3)  $X = z_1 {\partial \over \partial z_1} - z_2 {\partial \over \partial z_2} + Y$, where
$Y$ is a vector field which is flat along $\Sigma$. \\
Then $X$ is locally smoothly linearizable in a $\sigma$--and--$\delta$ equivariant way.
\end{lemma}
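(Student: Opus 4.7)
The plan is to build new $(\sigma,\delta)$-equivariant smooth coordinates $(\tilde z_1, \tilde z_2, w_3, \ldots, w_{m+2})$, each a flat perturbation of $(z_1, \ldots, z_{m+2})$, in which $X$ becomes $\tilde z_1 \partial_{\tilde z_1} - \tilde z_2 \partial_{\tilde z_2}$. The $(\sigma,\delta)$-equivariance, combined with the flatness of $Y$ and the form of the linear part, forces
\begin{equation*}
X = z_1\, A\, \partial_{z_1} + z_2\, B\, \partial_{z_2} + \sum_{j \geq 3} C_j\, \partial_{z_j},
\end{equation*}
where $A, B, C_j$ are smooth functions of $(z_1^2, z_2^2, z_3, \ldots, z_{m+2})$ and $A - 1$, $B + 1$, $C_j$ are flat along $\Sigma$; in particular $\{z_1 = 0\}$ and $\{z_2 = 0\}$ are $X$-invariant, and so play the role of the local stable and unstable manifolds of $\Sigma$.

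The construction then proceeds in two stages. First, I would straighten the fibers over $\Sigma$ by finding smooth equivariant $X$-invariant extensions $w_j = z_j + u_j$ for $j \geq 3$, with $u_j$ flat along $\Sigma$: this amounts to the equation $X u_j = -C_j$. In the resulting coordinates the vector field has no $\partial_{w_j}$-components and reduces to a smooth family, parametrized by $w = (w_3, \ldots, w_{m+2})$, of planar vector fields $z_1 \tilde A\, \partial_{z_1} + z_2 \tilde B\, \partial_{z_2}$ with $\tilde A - 1, \tilde B + 1$ flat. Second, I would normalize the hyperbolic directions by seeking equivariant multipliers $f_i = 1 + g_i$ with $g_i$ flat, so that $\tilde z_i = z_i f_i$ satisfy $X \tilde z_1 = \tilde z_1$ and $X \tilde z_2 = -\tilde z_2$; this reduces to two more equations of the same form $X g_i = (\text{flat})$.

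All three equations have the form $X \phi = h$ with $h$ smooth, equivariant and flat along $\Sigma$. The principal operator $X_0 = z_1 \partial_{z_1} - z_2 \partial_{z_2}$ acts on the monomial $z_1^k z_2^\ell$ by the eigenvalue $k - \ell$, nonzero off the diagonal $k = \ell$, while flatness of $h$ forces its formal expansion to vanish identically, so the formal Taylor solution is unique. The main obstacle is producing a genuine smooth solution: I would Borel-sum this formal solution to a smooth $\phi_0$ flat along $\Sigma$, and then kill the (still flat) residue $X \phi_0 - h$ by explicit integration along the hyperbolic flow of $X_0$ on the sectors $\{|z_1| \leq |z_2|\}$ and $\{|z_1| \geq |z_2|\}$, with the flatness of the integrand at $\Sigma$ controlling all boundary contributions; a standard contraction on the Fréchet space of flat equivariant functions then closes the iteration. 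Equivariance is preserved throughout by averaging over the $(\bbZ/2)^2$-action generated by $\sigma$ and $\delta$.
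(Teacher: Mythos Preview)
The paper does not actually prove this lemma: it simply observes that the statement is a special case of the equivariant Sternberg--Chen theorem of Belitskii and Kopanskii \cite{BK-Equivariant2002} (their Theorem~2.3) on equivariant smooth normal forms along a center manifold, and refers the reader there. Your proposal, by contrast, outlines a direct hands-on construction of the linearizing coordinates. So the two routes are genuinely different: the paper invokes a black box, while you are essentially re-deriving the relevant special case of that black box.

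Your reduction is sound. The parity argument giving $X = z_1 A\,\partial_{z_1} + z_2 B\,\partial_{z_2} + \sum_{j\ge 3} C_j\,\partial_{z_j}$ with $A,B,C_j$ smooth in $(z_1^2,z_2^2,z_3,\ldots)$ is correct, as is the observation that $\{z_1=0\}$ and $\{z_2=0\}$ are invariant. Stage~1 (making the transverse coordinates $w_j$ first integrals) and Stage~2 (normalizing the two hyperbolic directions via $\tilde z_i = z_i f_i$; note that taking $\phi_i=\log f_i$ makes this literally of the form $X\phi_i=\text{flat}$) correctly reduce everything to the single cohomological problem $X\phi=h$ with $h$ flat along $\Sigma$ and $(\sigma,\delta)$-equivariant.

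The one place that deserves more care is your ``explicit integration along the hyperbolic flow of $X_0$ on the sectors $\{|z_1|\le|z_2|\}$ and $\{|z_1|\ge|z_2|\}$.'' The flow of $X_0$ is a saddle, so trajectories leave any neighborhood of the origin in both time directions, and flatness of $h$ along $\Sigma$ alone does not make the naive improper integral $\int_0^{\pm\infty} h\circ\varphi_t\,dt$ converge on either sector. One needs an extra step --- e.g.\ cutting $h$ off, or splitting $h$ into pieces flat along $\{z_1=0\}$ and $\{z_2=0\}$ respectively, or passing to the invariant coordinate $z_1 z_2$ to straighten the flow --- before the integration and the subsequent Fr\'echet contraction go through. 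These are exactly the ``standard methods (the path method, cohomological equations, fixed points \ldots)'' that the paper alludes to when citing \cite{BK-Equivariant2002}. So your plan is correct in outline, but at this step you are not avoiding the Belitskii--Kopanskii machinery so much as re-implementing its core analytic ingredient; flagging that explicitly would strengthen the write-up.
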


The above lemma is in fact just a particular case of a general theorem of 
Belitskii and Kopanskii \cite{BK-Equivariant2002} (see Theorem 2.3 of that paper) about 
equivariant smooth normal forms of vector fields along central manifolds, so we will not have to
repeat its proof here. This theorem of Betliskii and Kopanskii is a generalization of
Sternberg--Chen theorem ``formal linearizability implies smooth linearizability for smooth hyperbolic
vector fields'' \cite{Chen-Vector1963,Sternberg}, and its proof uses a set of standard methods (the path
method, cohomological equations, fixed points and inverse function theorem in Banach spaces, etc.)
for dealing with smooth or $C^k$ normal forms of vector fields (see \cite{BK-Equivariant2002}).

Theorem \ref{thm:NambuHyperbolic} is proved.

\section{Formal and analytic linearization}
\label{section:analytic}

Before prove Theorem \ref{thm:NambuAnalytic} about analytic linearization, let us present here a new simple proof
of formal linearization based on Levi decomposition. 

Let $\Pi$ be a formal (complex or real) or smooth Nambu structure of order $q$ in dimension $n$
with a nondegenerate part of Type 1 at a point $O$ where $\Pi(O) = 0$. Denote by $\cL$ the set of 
(formal) vector fields which preserve $\Pi$ and tangent to the leaves of $P$. A way to obtain elements
of $\cL$ is to contract $\Pi$ with  an arbitrary closed  differential $(q-1)$-form. In particular,
by contracting $\Pi$ with diffenrential forms $dx_{i_1} \wedge \hdots \wedge dx_{i_{q-1}}$ in a coordinate
system $(x_1,\hdots,x_n)$ in which the linear part of $\Pi$ is already normalized, one sees that
the linear parts of the vector fields in $\cL$ form a simple Lie algebra $\frak{g}$, which is isomorphic to 
$so(q+1,\bbC)$ or a real form of it. According to the formal Levi decomposition theorem (see Theorem 3.1.2 of
\cite{DufourZung-PoissonBook}), $\cL$ admits a Levi factor isomorphic to $\frak{g}$. In other words,
there exists a subalgebra $\hat{\frak{g}} \subset \cL$ isomorphic to $\frak g$. Then $\hat{\frak{g}}$ can be
viewed as a formal action of $\frak{g}$ on the manifold, which can be formally linearized by Hermann's theorem
\cite{Hermann-Semisimple1968}. It implies that there is a formal coordinate system $(y_1,\hdots, y_n)$
in which the linear action of $so(q+1, \bbC)$ (or a real form of it) in question preserves our Nambu structure $\Pi$,
and whose orbits are the leaves of $\Pi$. In these coordinates, not only the folitation of $\Pi$ is linearized, but $\Pi$
is also $\frak{g}$-invariant. 
The last step in the formal linearization of $\Pi$ is to replace the coordinate system $(y_1,\hdots, y_n)$ 
by a new coordinate system of the type
$(\tilde{y}_1 := y_1 f, \hdots, \tilde{y}_{q+1} := y_{q+1}f, y_{q+2}, \hdots , y_{n})$, where
$f = f(Q,y_{q+2}, \hdots, y_n)$ is an 
appropriate formal function of $n-q$ variables, and $Q = (1/2) \sum_{j=1}^{q+1} \pm y_j^2$
is the quadratic function associated to the linear part of $\Pi$ in the coordinate system $(y_1,\hdots,y_n).$

In order to acheive analytic linearization, we will need a fast convergence algorithm, and a control on the norms,
similarly to \cite{Zung-Levi2003}: The Nambu structure will be linear up to order $2^\ell$ after Step $\ell$ of the algorithm,
i.e. after Steop $\ell$ we will have a coordinate system $(x_1^{l},\hdots, x_n^{\ell})$ in which
\begin{equation}
\Pi = \Pi^{(1)}_\ell  + o(\ell),
\end{equation}
 where $\Pi^{(1)}_\ell$ means the linear part of $\Pi$ with respect to the coordinate system $(x_1^{l},\hdots, x_n^{\ell})$ and
$o(\ell)$ means terms of degree at least $2^\ell+1$ in the Taylor expansion of $\Pi$. At step $\ell+1$, we will linearize $\Pi$ up 
to degree $2^{\ell+1}$ (i.e. the non-linear terms are of degree $\geq 2^{l+1} +1$).

Denote by $\alpha_{ij}^\ell = ({\partial \over \partial x_i^\ell} \wedge {\partial \over \partial x_j^\ell}) 
\lrcorner (d x_1^\ell \wedge \hdots \wedge d x_{q+1}^\ell)$, and $X_{ij}^\ell = \alpha_{ij}^\ell \lrcorner \Pi$,  
for $1 \leq i < j \leq {q+1}$. Then the vector fields $X_{ij}^\ell$ belong to the Lie algebra $\cL$ of (now analytic) 
vector fields which preserve $\Pi$ and tangent to the leaves of $\Pi$, and  $X_{ij}^\ell$ are linear up to degree $2^l$
in the coordinate system $(x_1^{l},\hdots, x_n^{\ell})$, and their linear parts span a Lie algebra isomorphic to
$\frak{g} = so(q+1,\bbC)$ (or a real form of it). Denote by $\cL(\ell)$ the subspace of $\cL$ consisting of vector fields or
order at least $2^\ell+1$, i.e. without terms of degree $\leq 2^\ell$. Then $\frak{g}$ has a natural linear representation on
$\cL(\ell)/\cL(\ell+1)$ by the formula
\begin{equation}
e_{ij}. X := [X_{ij},X]  \, \, \text{mod} \, \, \cL(\ell+1)
\end{equation}
where $(e_{ij})$ is the basis of $\frak{g}$.
The cochain
\begin{equation}
B_{ij,st} = [X_{ij},X_{st}] - \sum c^{uv}_{ij, st} X_{uv} \, \, \text{mod} \, \, \cL(\ell+1), 
\end{equation}
where $c^{uv}_{ij, st}$ are the structural constant of $\frak{g}$, is in fact a 2-cocycle of $\frak{g}$ with respect
to the above linear representation of $\frak{g}$ on $\cL(\ell)/\cL(\ell+1)$. Since $H^2(\frak{g}, \cL(\ell)/\cL(\ell+1)) = 0$
by Whitehead's lemma, there is a 1-cochain $Y_{ij} \in \cL(\ell)/\cL(\ell+1)$ of $\frak{g}$ whose Chevalley--Eilenberg
coboundary  is the cocycle $B_{ij,st}$. We can assume that $Y_{ij} \in \cL(\ell)$ which gives the corresponding representative
(denoted by the same symbol, by abuse of language) in $\cL(\ell)/\cL(\ell+1)$. Then by construction,
$\hat{X}_{ij} = X_{ij} -Y_{ij}$ form a Lie algebra up to degree $2^{\ell+1}$, i.e.
\begin{equation}
[\hat{X}_{ij},\hat{X}_{st}] - \sum c^{uv}_{ij, st} \hat{X}_{uv} \in \cL(\ell+1), 
\end{equation}
The non-linear terms $Z_{ij} = \hat{X}_{ij} - \hat{X}_{ij}^{(1), l} \in \cX(\ell)$ modulo $\cX(\ell+1)$, where 
$\cX(\ell)$ denotes the set of all vector fields of order $\geq 2^l +1$, form a 1-cocycle on $\cX(\ell) / \cX(\ell+1)$
with respect to a natural  linear representation of $\frak{g}$ on it. Applying the homotopy operator to this
cocycle, we obtain a vector field $Y \in \cX(\ell)$ such that $Z_{ij}$ is the coboundary of $Y$  mod $\cX(\ell+1)$.
Putting  $z_j = x_j^{\ell} - Y_j$, where $Y_j$ is the $j$-th coeffcient of $Y$ in the coordinate system
$(x_1^{\ell}, \hdots, x_n^{\ell})$, we get a new coordinate system $(z_1,\hdots,z_n)$ which 
linearizes the vector fields $\hat{X}_{ij}^l$ up to degree $2^{l+1}.$ It implies that, in these coordinates $(z_1,\hdots,z_n)$,
$\Pi$ modulo terms of degree $\geq 2^{\ell+1}+1$ is invariant with respect to the linear action of $\frak{g}$, i.e. the truncation at
degree $2^{\ell+1}$ of $\Pi$ is invariant with respect to the linear action of $\frak{g}$. By replacing $(z_1,\hdots, z_n)$ by a
(unique) appropriate coordinate system of the type $(x^{\ell+1}_1:= z_1 (1+f), \hdots, x^{\ell+1}_{q+1} := z_{q+1} (1+f), 
x^{\ell+1}_{q+2} := z_{q+2},\hdots, x^{\ell+1}_{n}: = z_n),$ where $f$ is a function which is $\frak{g}$-invariant 
(here the action of $\frak{g}$ is linear in the coordinates $(z_1,\hdots, z_n)$) and is of order at least $2^\ell+1$ , 
we can eliminates the nonlinear part of $\Pi$ (which is $\frak{g}$-invariant) up to degree $2^{\ell+1}$. In these
new coordinates $(x^{\ell+1}_{1},\hdots, x^{\ell+1}_{n})$, the Nambu structure $\Pi$ is linear up to degree $2^{\ell+1}$.

At the formal level, the above fast convergence linearization algorithm 
(eliminating nonlinear terms from degree $2^\ell+1$ to degree $2^{\ell+1}$ at Step $\ell+1$) works without any problem, 
modulo a straightforward verification of the small claims in the construction. In order to be sure that the above algorithm
also works analytically, we must control the norms of the cocycles and cochains in the process. But this control is relatively
simple, because there is no small divisor problem here. In fact, not only that the first and second cohomologies of the simple algebra
$\frak{g}$ vanish, but they also vanish in a ``normed'' way, i.e. the norm of the homotopy operators which solve the cohomological
equations are bounded. This ``bounded vanishing of cohomology'' was observed by Conn in \cite{Conn-AnalyticPoisson1984}
and  is a important feature of semisimple Lie algebras which  has been used in various 
problems of normal forms involving Lie algebras (see e.g. \cite{Conn-AnalyticPoisson1984,MMZ-Rigidity2012,
MonnierZung-Levi2004,Zung-Levi2003} and Section 3.4 of \cite{DufourZung-PoissonBook}). 
The fact that an element of $\cL(\ell)/\cL(\ell+1)$ can be lifted to an element of $\cL(\ell)$ in a norm-controlled way
is due the the nondegeneracy of $\Pi$. In order to make this statement easy to see, 
one can use the theorem of \cite{DufourZung-Nambu1999}  about analytic linearizability of $\Pi$ up to multiplication 
by a function (though it can also be done without this theorem). With this total control over the norms in the linearization process,
one can now repeats the machinary of \cite{Zung-Levi2003} (construction of a deacreasing sequence of balls whose intersection
is a neighborhood of the origin, etc.) in order to prove the convergence of the linearization process. This convergence proof 
is rather routine, so we will skeep it here, reafering the reader to \cite{Zung-Levi2003} for the details of the machinary.
Theorem \ref{thm:NambuAnalytic} is proved.

\section{Final remarks}
\label{section:remarks}

It might be possible to prove the analytic linearization theorem by using Moser's path method \cite{Moser-VolumeElement}, starting 
from the weaker theorem about linearization after multiplication. We did use the Moser's path method for the
smooth linearization of elliptic Nambu structures \cite{DufourZung-Nambu1999}, but we don't know yet how to use this
method in the analytic context. The difficulty lies at the singular locus, where the construction may cease to be analytic.
That's why we turned to Levi decomposition to find a proof.

The problem of linearization studied in this paper is related to the poroblem of the existence of local
first integrals for Nambu structures and integrable differential forms. Integrable 1-forms have been extensively
studied in the literature. In particular, the problem of existence of local first integrals for  integrable 1-forms,
was solved by Malgrange \cite{Malgrange} (in the analytic case) and Moussu \cite{Moussu,Moussu2}
(in the smooth case, using a beautiful preparation theorem of Roche \cite{Roche-Trivialization1982}). On the other hand,
there have been so far few results on general integrable $p$-forms when $p > 1$ and corresponding Nambu structures.
(See Chapter 6 of \cite{DufourZung-PoissonBook} and references therein for known results. Some particularly
nice results are due to Medeiros \cite{Medeiros-1977,Medeiros-2000}). It would be nice to have at least some results
about existence of first integrals for Nambu structures and differential forms with ``reasonable'' (but degenerate)
singularities. As was evident already from the linear case, singular points of differential $p$-forms and Nambu structures
can not be isolated in general. But there should be a natural notion similar to ``isolated singularities'' for them ?

At the foliation level, the fact that nondegenerate Type 1 Nambu structures can be linearized, may be viewed as an instance
of the phenomenon of ``singular Reeb stability'', which is similar to Reeb's stability \cite{WR} but for singular foliations.
The topological reasons behind it (the fundamental group of the leaves is trivial) is similar to the regular case of Reeb.

The problem of linearization of Nambu structure is related to the general problem of linearization of singular foliations.
Indeed, foliations given by linear Nambu structures from a very particular family (or 2 families, 
if one considers Type 1 separate from Type 2) of linear foliations. Apparently, not much is known about the linearization
of perturbations of other families of linear foliations, 
except in the cases related to linear actions of semisimple Lie algebras.

\vspace{0.5cm}

\end{document}